\DeclareMathAccent{\widehat}{\mathord}{largesymbols}{"62}
\DeclareFontFamily{U}{mathx}{\hyphenchar\font45}
\DeclareFontShape{U}{mathx}{m}{n}{
      <5> <6> <7> <8> <9> <10>
      <10.95> <12> <14.4> <17.28> <20.74> <24.88>
      mathx10
      }{}
\DeclareSymbolFont{mathx}{U}{mathx}{m}{n}
\DeclareMathAccent{\widecheck}{0}{mathx}{"71}
\def\ve{\mathsf{e}}
\def\vu{\mathsf{u}}
\def\vv{\mathsf{v}}
\def\vz{\mathsf{z}}
\def\vw{\mathsf{w}}
\def\vh{\mathsf{h}}
\def\vn{\mathsf{n}}
\def\bnu{\mathbf{\nu}}
\def\rM{\mathsf{M}}
\def\rG{\mathsf{G}}
\def\rS{\mathsf{S}}
\def\ce{\underbar{e}}
\def\trim{\widecheck} 
\def\Et#1{\trim{E_{#1}}}
\def\Etn{\Et{n}}
\def\et#1{\trim{e_{#1}}}
\def\Ht#1{\trim{H_{#1}}}
\def\trimb#1{\trim{\overbar{#1}}}
\def\opoint{{\trim{o}}}
\def\clip{\widetilde} 
\def\clipb#1{\clip{\overbar{#1}}}
\def\split{\wideparen}
\def\lgp{\left\lgroup}
\def\rgp{\right\rgroup}
\def\Phihat{\widehat{\Phi}}
\def\A{\mathfrak{A}}
\def\Ahat{\widehat{\A}}
\def\a{\alpha}
\def\b{\beta}
\def\w{\omega}
\def\sech{\operatorname{sech}}
\def\csh{\cosh t\,}
\def\ssh{\sinh t\,}
\def\di{\partial}
\def\dibar{\overbar{\partial}}
\def\C{\mathbb{C}}
\def\CP{\mathbb{C}P}
\def\RP{\mathbb{R}P}
\def\R{\mathbb{R}}
\def\M{\mathcal M}
\def\B{\mathcal B}
\def\F{\mathcal F}
\def\scrA{\EuScript A}\def\scrB{\EuScript B}
\def\calD{\mathcal D}
\def\calE{\mathcal E}
\def\calH{\mathcal H}
\def\calN{\mathcal N}
\def\calX{\EuScript X}
\def\scrN{\EuScript N}
\def\Mhat{\widehat{M}}
\def\rJ{\mathsf{J}}
\def\scrN{\EuScript{N}}
\newcommand{\overbar}[1]{\mkern 0.8mu\overline{\mkern-0.8mu#1\mkern-0.8mu}\mkern 0.8mu}
\def\Zbar{\overbar{Z}}
\def\Wbar{\overbar{W}}
\def\Kahler{\Omega}
\def\Hvol{\Theta} 
\def\ri{\mathrm{i}}
\renewcommand\Im{\operatorname{Im}}
\renewcommand\Re{\operatorname{Re}}
\def\rk{\operatorname{rk}}
\def\II{\operatorname{II}}
\def\intprod{\mathbin{\raisebox{.4ex}{\hbox{\vrule height .5pt width
5pt depth 0pt %
         \vrule height 3pt width .5pt depth 0pt}}}}
\newtheorem{thm}{Theorem}
\newtheorem{lemma}[thm]{Lemma}
\newtheorem{prop}[thm]{Proposition}
\newtheorem{cor}[thm]{Corollary}
\theoremstyle{definition}
\newtheorem{egg}{Example}
\begin{document}
\title{Austere Submanifolds in $\CP^n$}
\author{Marianty Ionel}
\address{Institute of Mathematics, Federal University of Rio de Janeiro, Brazil}
\author{Thomas A. Ivey}
\address{Dept. of Mathematics, College of Charleston, 66 George St., Charleston SC 29424}

\begin{abstract}
For an arbitrary submanifold $M \subset \CP^n$ we determine conditions under which it is {\em austere},
i.e., the normal bundle of $M$ is special Lagrangian with respect to Stenzel's Ricci-flat K\"ahler metric on $T\CP^n$.
We also classify austere surfaces in $\CP^n$.
\end{abstract}

\date{July 15, 2015}
\maketitle

\section{Introduction}
Special Lagrangian submanifolds were introduced in 1982 by Harvey and Lawson in their seminal paper \cite{HL}.
They studied them in the more general context of calibrated submanifolds, which are a special class of minimal submanifolds. Calibrated submanifolds, in particular special Lagrangian submanifolds, play an important role in mirror symmetry and  they have lately been the object of extensive study.  Most of the earlier research has focused on special Lagrangian submanifolds in $\C^n$ (see Joyce \cite{JHol} and the extensive references contained therein).

Let $\calX$ be a Calabi-Yau manifold of complex dimension $n$ with K\"ahler form $\Kahler$ and holomorphic volume form $\Hvol$.  Recall that an oriented submanifold $L$ of real dimension $n$ is {\em special Lagrangian} if it is calibrated by $\Re \Hvol$.  Harvey and Lawson showed that $L$ is special Lagrangian if and only if $\Kahler|_L\equiv 0$ and $\Im \Hvol |_L\equiv 0$.  (The same is true if we replace $\Hvol$ by $e^{\ri \phi} \Hvol$, in which case $L$ is said
to be special Lagrangian {\em with phase} $e^{\ri \phi}$.)  In the same paper \cite{HL}, Harvey and Lawson exhibited a construction of special Lagrangian submanifold using bundles. Specifically, they showed that the conormal bundle $N^*M$ of an immersed submanifold $M^k\subset \R^n$ is special Lagrangian in $\C^n\cong T^*\R^n$ (with phase depending on $k$ and $n$) if and only if $M^k$ is {\em austere} in $\R^n$, i.e., the second fundamental form of $M$ in any normal direction has its eigenvalues symmetrically arranged around 0.  (This is equivalent to saying that all the odd-degree symmetric polynomials in the eigenvalues of the second fundamental form vanish identically.) Note that the austere condition implies that $M$ is minimal, but in general is stronger than minimality.

In the early 1990s, Stenzel \cite{Sthesis,Stpaper} showed that the cotangent bundle of any compact rank one symmetric space can be endowed with a Ricci-flat metric, which is now called the {\em Stenzel metric}. Particular cases of Stenzel metrics were initially discovered by Eguchi-Hanson on the cotangent bundle $T^* S^2$ and by Candela-de la Ossa on $T^*S^3$. In \cite{KM} Karigiannis and Min-Oo generalized Harvey and Lawson's construction to the cotangent bundle of $S^n$ carrying the Stenzel Ricci-flat metric.  Specifically, they showed that the conormal bundle over an immersed submanifold $M\subset S^n$ is special Lagrangian with respect to some phase if and only if  all the odd-degree symmetric polynomial in the eigenvalues of the second fundamental form, in any normal direction, vanish identically.  In other words, this is the same austere condition as Harvey and Lawson found in \cite{HL} for $\R^n$.  This is  perhaps surprising, since the complex structure on $T^*S^n$ is not the standard one (as it is in the case of $\C^n\cong T^*\R^n$) but instead is obtained by identifying it with a complex affine hyperquadric in $\C^{n+1}$.

In this paper, we further generalize the Harvey and Lawson construction to the case of $T^*\CP^n$, the cotangent bundle of complex projective space.
We define $M\subset \CP^n$ to be {\it austere} if its conormal bundle $N^*M$ is special Lagrangian in $T^*\C P^n$, with respect to the Stenzel metric.
We will calculate conditions on the second fundamental form of $M$ that are necessary and sufficient for $M$ to be austere.
(In fact, we will work on the normal bundle $NM$, using the standard metric on $\CP^n$ to
 identify $T^* \CP^n$ with $T \CP^n$.)

We now give a brief description of the contents of the paper.
\begin{itemize}
\item
In section \ref{lagsect}, we define a mapping $\Phi$ that
 identifies $T\CP^n$ with a Stein manifold which is the complement of a complex quadric in $\C P^n\times \C P^n$.
 (We do this in order to utilize the convenient expression of the Stenzel K\"ahler form given by Lee \cite{Lee} in terms of coordinates on $\C P^n\times \C P^n$.)  We calculate the differential of the restriction of $\Phi$ to $N M$ using moving frames.
We prove that if $M\subset \C P^n$ is an arbitrary immersed submanifold, then $N M$ is a Lagrangian submanifold of $T\CP^n$
with respect to the Stenzel K\"ahler form (see Prop. \ref{lagness}).\footnote{
Note that while it is a basic result that the conormal bundle of any submanifold of a space $\calX$ is a Lagrangian submanifold of $T^*\calX$ with
respect to its standard symplectic form, our Prop. \ref{lagness} is non-trivial since the Stenzel K\"ahler form is not the standard symplectic structure.}
\item
In section \ref{ausect}, we determine the conditions under which an immersed submanifold $M\subset \CP^n$ is austere (see Theorem \ref{austerethm}).
A corollary of this result is that if $M\subset \C P^n$ is an arbitrary complex submanifold, then $M$ is austere (see Corollary \ref{holcor}).
\item
Finally, in section \ref{surfsect}, we classify the austere surfaces in $\CP^n$, showing that they must be either holomorphic curves or totally geodesic (see Theorem \ref{surfthm}).
\end{itemize}
While it might seem that there is a dearth of non-holomorphic examples of austere submanifolds in $\CP^n$, in subsequent papers we will
investigate the solution space of the austere condition for real hypersurfaces in this geometry, and exhibit
new examples of austere hypersurfaces in $\CP^2$ and higher dimensions.

Before proceeding with the calculations leading up to Proposition \ref{lagness}, we need to make a few remarks:
\begin{enumerate}
\item
As indicated above, our calculations will be made using moving frames.  Because the members of the moving frame are easier to differentiate
when they take value in a fixed vector space, we will do most calculations
on $\Mhat \subset S^{2n+1} \subset \C^{n+1}$, which is the inverse image of $M$ under the Hopf fibration
$\pi: S^{2n+1} \to \CP^n$.
(This is the restriction of the projectivization map from $\C^{n+1}/\{0\}$ to $\CP^n$, which we also denote by $\pi$.)
Similarly, we will do calculations on $NM \subset T \CP^n$ by regarding $T \CP^n$ as a quotient space
and working on the inverse image.
In more detail, let
$$B =\{ (\zeta,\xi) \in \C^{n+1} \times \C^{n+1}\, | \, \zeta \ne 0, \xi \cdot \overbar{\zeta}=0\},$$
wherein the dot product is $\C$-bilinear.  Recall that for a one-dimensional subspace $L \subset \C^{n+1}$, $T_L \CP^n$ is canonically defined
as the set of $\C$-linear maps $f$ from $L$ to the quotient
vector space $\C^{n+1}/L$.  If $f$ is such a map and $\zeta$ is a nonzero point on $L$, there is a unique
$\xi$ which projects to $f(\zeta)$ in the quotient vector space and satisfies $\xi \cdot \overbar{\zeta}=0$.
Thus,
 $T\CP^n \cong B/\C^*$, where the $\C^*$ action on $B$ is
$(\zeta,\xi)\mapsto (\lambda\zeta, \lambda \xi)$ for $\lambda\in \C^*$.

\medskip
\item Although the cotangent bundle of any complex manifold $\calX$
has a standard complex structure (obtained by identifying it with the bundle of $(1,0)$-forms),
the complex structure underlying the Stenzel metric on $T^*\CP^n$ is not the standard one.
For example, under the mapping $\Phi$ the image of the zero section is a totally real submanifold.

\medskip
\item For an arbitrary submanifold $M \subset \CP^n$ and $p\in M$, let $\calH_p$ and $\calN_p$
be maximal $\rJ$-invariant subspaces of $T_p M$ and $N_p M$ respectively (where $\rJ$ denotes the ambient complex structure).
We'll assume that $\calH = \underset{p \in M}\bigcup \calH_p$ and $\calN = \underset{p \in M}\bigcup \calN_p$
are smooth sub-bundles of $TM$ and $NM$, and let $\calD$ and $\calE$ be their respective orthogonal complements.
Thus, we have an orthogonal splitting
\begin{equation}\label{Tsplit}
T\CP^n\vert_M = TM \oplus NM = (\calH \oplus \calD) \oplus (\calE \oplus \calN)
\end{equation}
such that $\calD \oplus \calE$ is $\rJ$-invariant and $\rk \calD = \rk \calE \le n$.
As we will see in the statement of Theorem 4, the austere condition along a fiber of $NM$ depends on how the corresponding
normal vector splits into components in $\calE$ and $\calN$.
\end{enumerate}

We gratefully acknowledge the comments and helpful discussions with the following mathematicians
during our work on this paper:
Henri Anciaux,
Ronan Conlon,
Spiro Karigiannis,
JaeHyouk Lee,
Conan Leung,
Paul-Andi Nagy,
and  Pat Ryan.

\section{Lagrangian Submanifolds via Normal Bundles}\label{lagsect}
Throughout what follows, $M \subset \CP^n$ will denote a submanifold of real dimension $k$.
As above, let $\Mhat = \pi^{-1}(M)$ where $\pi:S^{2n+1}\to \CP^n$ is the Hopf fibration.
(We will regard points of $S^{2n+1}$, as well as tangent vectors to the sphere, as
vectors in $\C^{n+1}$.)
Let $\F$ denote the bundle of oriented orthonormal frames $(e_0, \ldots, e_{2n})$ along $\Mhat$ that are {\em adapted} in the sense
that $e_0, \ldots, e_k$ are tangent to $\Mhat$ and $e_0 = \ri \vz$ is tangent to the fiber of $\pi$ at $\vz \in \Mhat$.
(It follows that $e_1, \ldots, e_{2n}$ are orthogonal to the fiber of $\pi$, and so are called {\em horizontal} vectors.)
The fibers of $\pi:\Mhat \to M$ are, of course, orbits of the $S^1$ action $\vz \mapsto e^{\ri \theta} \vz$.  This
action extends to $\F$,  by simultaneously multiplying all frame vectors by $e^{\ri \theta}$, and preserves
horizontality.


Given any $\vz \in \Mhat$ and any normal vector $\vn \in N_\vz \Mhat$ (necessarily horizontal), there exists an adapted frame at $\vz$ such that
$\vn = t e_{2n}$ for some $t\in \R$.
Thus, the mapping
$$\varrho:((\vz, e_0, e_1, \ldots, e_{2n}),t) \mapsto t e_{2n} \in T_\vz S^{2n+1}$$
is a submersion from $\F \times \R$ to $N\Mhat$.  Because any normal vector $\bnu \in N_{\pi(\vz)} M$ has a horizontal
lift in $N_\vz \Mhat$, there is also a submersion $\Pi:N\Mhat \to NM$ defined by
$\vn \mapsto \pi_* \vn$.  We define yet another submersion
$$\rho:((\vz, e_0, e_1, \ldots, e_{2n}),t) \mapsto (\vz, t e_{2n}) \in B$$
that lifts the natural inclusion $\imath:NM\to T \CP^n$.  In other words, we
have a commutative diagram
\begin{center}
\setlength{\unitlength}{4pt}
\begin{picture}(35,25)(10,0)
\put(16,20){\makebox(0,0){$\F\times \R$}}
\put(20,20){\vector(1,0){8}}
\put(23,22){\makebox(0,0){$\rho$}}
\put(30,20){\makebox(0,0){$B$}}
\put(30,17){\vector(0,-1){13}}
\put(16,19){\vector(0,-1){5}}
\put(13,16){$\varrho$}
\put(14,10){$N\Mhat$}
\put(14,0){$NM$}
\put(16,9){\vector(0,-1){6}}
\put(13,5){$\Pi$}
\put(24,3){\makebox(0,0){$\imath$}}
\put(20,1){\vector(1,0){7}}
\put(28,0){$T\CP^n$}
\end{picture}
\end{center}
where the vertical map at right is quotient by the $\C^*$ action.

Let $\M$ denote the open subset in $\CP^n \times \CP^n$ defined in homogeneous
coordinates by $\displaystyle\sum_{i=0}^n z_i w_i \ne 0$.  (This space is denoted by $M^{2n}_{II}$ in \cite{Lee}.)
 To generate an embedding of $NM$ as a submanifold in the Stein manifold $\M$, we will
compose $\rho$ with a map $\Phihat : B \to \C^{n+1}\times \C^{n+1}$ defined by
\begin{equation}\label{defofPhi}
\Phihat(\zeta,\xi) = \left( (\cosh \mu) \zeta + \ri \left( \dfrac{\sinh \mu}{\mu} \right) \xi;
(\cosh \mu) \overbar\zeta + \ri \left( \dfrac{\sinh \mu}{\mu} \right) \overbar\xi\right),
\quad \mu = \dfrac{|\xi|}{|\zeta|}.
\end{equation}
(This is adapted from the work of Sz\"oke \cite{Sz}.)  We will write elements of $\C^{n+1} \times \C^{n+1}$ as
an ordered pair of row vectors in $\C^{n+1}$ separated by a semicolon.

It is easy to check that, relative to the $\C^*$ action on $B$ and projectivization
on each factor in $\C^{n+1}\times \C^{n+1}$, $\Phihat$ covers a well-defined
embedding $\Phi: T\CP^n \to \CP^n \times\CP^n$ that identifies $T\CP^n$ bijectively with $\M$.
We will compute the differential of the composition of $\Phi$ with the inclusion $\imath$ by computing
the differential of the composition of maps along the top edge of the following diagram
\begin{equation}\label{bigcommute}
\setlength{\unitlength}{4pt}
\begin{picture}(65,25)(5,0)
\put(16,20){\makebox(0,0){$\F\times \R$}}
\put(20,20){\vector(1,0){8}}
\put(23,22){\makebox(0,0){$\rho$}}
\put(30,20){\makebox(0,0){$B$}}
\put(30,17){\vector(0,-1){13}}
\put(32,20){\vector(1,0){16}}
\put(41,22){\makebox(0,0){$\Phihat$}}
\put(50,19){$\C^{n+1}\times \C^{n+1}$}
\put(57,17){\vector(0,-1){13}}
\put(60,17){\vector(1,-1){13}}
\put(16,19){\vector(0,-1){5}}
\put(13,16){$\varrho$}
\put(14,10){$N\Mhat$}
\put(14,0){$NM$}
\put(16,9){\vector(0,-1){6}}
\put(13,5){$\Pi$}
\put(24,3){\makebox(0,0){$\imath$}}
\put(20,1){\vector(1,0){7}}
\put(28,0){$T\CP^n$}
\put(16,9){\vector(0,-1){6}}
\put(37,1){\vector(1,0){11}}
\put(42,3){\makebox(0,0){$\Phi$}}
\put(50,0){$\CP^n \times \CP^n$}
\put(66,1){\vector(1,0){7}}
\put(69,3){\makebox(0,0){$\A$}}
\put(69,12){\makebox(0,0){$\Ahat$}}
\put(75,0){$\C^{2n}$}
\end{picture}
\end{equation}
in which $\A$ denotes an affine coordinate chart (to be specified below) and $\Ahat$ is the corresponding map in terms of homogeneous coordinates.
As we will see at the end of the next section, the differential along the top edge annihilates vectors that are tangent
to the fibers of the map $\Pi \circ \varrho$.

\subsection{Geometry of the Frame Bundle}
On $\F$ we define a set of real-valued 1-forms
by expanding the derivatives of the basepoint function and frame vectors in terms of the basis (over $\R$) for $\C^{n+1}$ provided
by the frame itself.  Taking the index ranges $0\le a,b\le k$ and $k+1 \le \kappa, \lambda \le 2n$, and using the summation convention, we write
\begin{equation}\label{dees0}
d\vz = e_a \w^a, \qquad d e_a = -\vz \w^a + e_b \psi^b_a + e_\lambda \psi^\lambda_a, \qquad
d e_\kappa = e_b \psi^b_\kappa + e_\lambda \psi^\lambda_\kappa.
\end{equation}
(In these equations each term is a vector-valued 1-form, i.e., a section of $\C^{n+1} \otimes T^* \F$.  While we omit the tensor
product symbol in these equations and write the vector factor on the left, it will occasionally be convenient to write the
vector on the right, as in \eqref{bigd}, \eqref{daff}, \eqref{daffy} below.)
The last equation in \eqref{dees0} has no $\vz$ terms on the right-hand side because
$$0 = \langle  d\vz, e_\kappa \rangle = - \langle \vz, d e_\kappa\rangle,$$
where $\langle, \rangle$ denotes the real-valued Euclidean inner product on $\C^{n+1}$.

The full oriented orthonormal frame bundle of $S^{2n+1}$ may be identified with the special orthogonal group $SO(2n+2)$,
by taking the basepoint and the frame vectors as successive rows of an orthogonal matrix.
However, since by default we will regard the frame vectors as taking value in $\C^{n+1}$, to make the identification precise we introduce the
convention that for a vector $\vv \in \C^N$,
$$\split{\vv} := (\Re\vv; \Im\vv) \in \R^{2N}.$$
Then, in terms of this notation,
\begin{equation}\label{sonny}
\begin{pmatrix}\split{\vz} \\ \split{e_0} \\ \split{e_1} \\ \vdots \\ \split{e_{2n}}\end{pmatrix} \in SO(2n+2).
\end{equation}
In this way, $\F$ is identified with a submanifold in $SO(2n+2)$, and in fact the 1-forms defined by \eqref{dees0} are the components
of the Maurer-Cartan form of $SO(2n+2)$, pulled back to $\F$.

Not all of these 1-forms are linearly independent on $\F$.  We will need to clarify
the dependencies among the 1-forms on $\F$ involved in the differential of the components $\vz$ and $e_{2n}$ of the map $\rho$, which are
$$\w^0, \w^\a, \psi^0_{2n}, \psi^\a_{2n}, \psi^\nu_{2n}.
$$
(Here, we introduce new index ranges $1 \le \alpha, \beta \le k$ and $k+1\le   \mu, \nu < 2n$.)
Recalling that $e_0 = \ri\vz$, we have
$$\psi^0_{2n} = \langle e_0, de_{2n}\rangle = -\langle \vz, d(\ri e_{2n})\rangle = \langle d\vz, \ri e_{2n}\rangle.$$
Thus, if we define functions $r_\a = \langle \ri e_{2n}, e_\a\rangle$ on $\F$, then $\psi^0_{2n} = r_\a \w^\a$.
(Note that if $M$ is a complex submanifold then all $r_\a=0$.)
We also have
$$0 = d\langle e_{2n}, d \vz \rangle = \psi^a_{2n} \wedge \w^a = (\psi^\a_{2n} - r_\a \w^0) \wedge \w^\a,$$
which shows that
$$\psi^\a_{2n} = r_\a \w^0-h_{\a\b}\w^\b$$
for some functions $h_{\a\b} = h_{\b\a}$.  In fact, one can check that if we define
$\ce_{\a} = \pi_* e_\a$ and $\ce_{2n}=\pi_* e_{2n}$, then
 $h_{\a\b}= \ce_{2n} \cdot \II(\ce_\a, \ce_\b),$
where $\II$ is the second fundamental form of $M$ at the point $\pi(\vz)$.
Putting these results together, we have
\begin{equation}\label{dees}
d\vz = \ri \vz \w^0 + e_\a \w^\a, \qquad d e_{2n} = \ri  \vz (r_\a \w^\a) + e_\a (r_\a \w^0 -h_{\a\b} \w^\b) + e_\mu \psi^\mu_{2n}.
\end{equation}

\subsection{Computing the Differential}
We now use the above formulas to compute
\begin{multline*}
d(\Phihat \circ \rho) = (\csh d\vz + \ri \ssh de_{2n}+(\ssh \vz + \ri \csh e_{2n}) dt;\\
\csh d\overbar{\vz} + \ri \ssh d\overbar{e_{2n}} + (\ssh \overbar{\vz} +\ri \csh\overbar{e_{2n}}) dt),
\end{multline*}
giving
\begin{multline}\label{bigd}
d(\Phihat \circ \rho) =
\csh [\w^0,\, \w^\a,\, \psi^\mu_{2n},\, dt] \\
\otimes \lgp \lgp \begin{array}{cccc} \ri & \ri \tau r_\b & 0 & 0 \\
            -\tau r_\a & \delta_{\a\b} -\ri \tau h_{\a\b} & 0 & 0\\
            0 & 0 & \ri\tau \delta_{\mu\nu} & 0 \\
            \tau & 0 & 0 & \ri\end{array}\rgp
\begin{bmatrix} \vz \\ e_\b \\ e_\nu \\ e_{2n}\end{bmatrix};
\lgp \begin{array}{cccc}
             -\ri & \ri \tau r_\b & 0 & 0 \\
            \tau r_\a & \delta_{\a\b}-\ri\tau h_{\a\b} & 0 & 0\\
            0 & 0 & \ri\tau \delta_{\mu\nu} & 0 \\
            \tau & 0 & 0 & \ri \end{array}\rgp
\begin{bmatrix} \overbar{\vz}\\ \overbar{e_\b} \\ \overbar{e_\nu} \\ \overbar{e_{2n}}\end{bmatrix} \rgp.
\end{multline}
(The matrices are partitioned into block form, with column and row widths $1$, $k$, $n-k-1$ and $1$; we have also introduced
the abbreviation $\tau = \tanh t$.)

We will use the isometry group $U(n+1)$ to simplify these matrices, by moving any horizontal vector in $N\Mhat$ into
a standard position.  For, given any point $\vz \in \Mhat$ and a horizontal vector $\vh \in T_{\vz} S^{2n+1}$, we
can arrange that
$$\vz = \begin{bmatrix} 1 & 0 & \ldots & 0\end{bmatrix}, \qquad \vh =\begin{bmatrix} 0 & \ldots & 0 & \ri\end{bmatrix}.$$
Thus, from now on we will assume that $\vz = E_0$ and $e_{2n} = \ri E_n$,
where $E_0, \ldots, E_n$ denote the elementary basis row vectors of $\C^{n+1}$.  Then
$$\Phihat\circ \rho((\vz, e_0,\ldots,e_{2n}),t) = (\csh, 0, \ldots, 0, -\ssh; \csh, 0, \ldots, 0, \ssh ).$$

Let $o \in \C^{n+1} \times \C^{n+1}$ be the point on the right-hand side.  For any $t$ this is in the domain of the map
$$\Ahat: (z_0, \ldots, z_n; w_0, \ldots, w_n) \mapsto \left( \dfrac{z_1}{z_0}, \ldots, \dfrac{z_n}{z_0};
                                                    \dfrac{w_1}{w_0}, \ldots, \dfrac{w_n}{w_0} \right).$$
which covers the affine coordinate chart $\A$.
We compute the differential of $\Ahat$, and evaluate at $o$:
$$d\Ahat_o = \sech t(dz_1, \ldots, dz_{n-1}, dz_n+\tau dz_0; dw_1, \ldots, dw_{n-1}, dw_n -\tau dw_0).$$
To compute the differential of $\Ahat\circ \Phihat\circ\rho$, we apply the $\C$-linear map $d\Ahat_o$ to the
vector factors in the tensor product in \eqref{bigd}.  For example,
$$d\Ahat_o(e_\a;0) =(\et\a; 0),\qquad d\Ahat_o(e_{2n};0) = (\ri \Etn;0), \qquad d\Ahat_o(\vz;0) = (\tau\Etn;0),
\qquad d\Ahat_o(0;\overbar{\vz})= (0,-\tau\Etn),
$$
where the $\,\trim{}\,$ indicates the result of deleting the first entry from a row vector in $\C^{n+1}$,
and we use the fact that, because the vectors $e_\a$ and $e_\nu$  are horizontal at $\vz=E_0$, their
first entries are zero.  Computing in this way, we obtain
\begin{equation}\label{daff}
d(\Ahat\circ\Phihat\circ\rho) = [\w^0,\, \w^\a,\, \psi^\mu_{2n},\, dt] \otimes \\
\lgp\begin{array}{ccc}
\ri \tau (\Etn + r_\b \et\b) &;& \ri \tau (\Etn+r_\b \trim{\overbar{e_\b}}) \\
\et\a -\ri \tau h_{\a\b} \et\b -\tau^2 r_\a\Etn &;& \trim{\overbar{e_\a}} - \ri \tau h_{\a\b}\trim{\overbar{e_\b}}-\tau^2 r_\a \Etn \\
\ri \tau \et\mu &;& \ri\tau \trim{\overbar{e_\mu}} \\
(\tau^2-1)\Etn &;& (1-\tau^2)\Etn
\end{array}
\rgp.
\end{equation}
While the 1-forms $\w^0, \w^\a, \psi^\mu_{2n}$ and $dt$ are linearly independent and span the
semibasic 1-forms for $\varrho$, it is evident from the first equation in  \eqref{dees} that $\w^0$ is not semibasic for $\Pi\circ \varrho$.
In fact, if $\vv$ is the vector field on $\F\times \R$ that generates the $S^1$ action under which
$NM = N\Mhat/S^1$, then
$$\vv \intprod \w^0 = 1, \qquad \vv \intprod \w^\a =0, \qquad \vv\intprod \psi^\mu_{2n} = r_\mu:=\langle \ri e_{2n},e_\mu\rangle.$$
Using these formulas, it is easy to check that $\vv$ is in the kernel of the differential \eqref{daff}.
In fact, the $r_\mu$ give the coefficients under which the top row of the matrix is a linear combination
of the third set of rows below it, since $\Etn + r_\b \et\b  = -\ri \et{2n} + r_\b \et\b = -r_\mu e_\mu$.

Thus, in terms of the diagram \eqref{bigcommute}, $\Ahat \circ\Phihat\circ\rho$ covers a well-defined map $\A \circ \Phi\circ\imath$
from $NM$ to $\C^{2n}$.  Matters being so, we will expand the differential just in terms of the 1-forms
$\w^\a, dt$ and $\widetilde \psi^\mu_{2n}:=\psi^\mu_{2n}-r_\mu\w^0$, as
\begin{multline}\label{daffy}
d(\Ahat\circ\Phihat\circ\rho) = [\w^\a,\, \widetilde \psi^\mu_{2n},\, dt] \otimes
\lgp\begin{array}{ccc}
\et\a -\ri \tau h_{\a\b} \et\b -\tau^2 r_\a\Etn &;& \trim{\overbar{e_\a}} - \ri \tau h_{\a\b}\trim{\overbar{e_\b}}-\tau^2 r_\a \Etn \\
\ri \tau \et\mu &;& \ri\tau \trim{\overbar{e_\mu}} \\
(\tau^2-1)\Etn &;& (1-\tau^2)\Etn
\end{array}
\rgp.
\end{multline}

\subsection{The Stenzel K\"ahler Form}
A convenient explicit description of the Stenzel metric in local coordinates on $\CP^n \times \CP^n$ is given by
T-C. Lee \cite{Lee}; we briefly reproduce it here for the sake of the calculations in \S\ref{lagsec}.
Lee defines two functions on $\C^{n+1} \times \C^{n+1}$,
$$\scrA = \sum_{j,k=0}^n |z_j w_k|^2, \qquad \scrB = \vz \cdot \vw = \sum_{j=0}^n z_j w_j,$$
which are homogeneous of degree 4 and 2 respectively; then the exhaustion function $\scrN = \scrA/|\scrB|^2$
is well-defined and smooth on $\M$.
The K\"ahler potential $f(\scrN)$ for the Stenzel metric satisfies $f' = \scrN^{-1/2}$.
Using this, we calculate
the K\"ahler form in terms of affine coordinates $Z_a = z_a/z_0$ and $W_a = w_a/w_0$, where we now take $1\le a, b \le n$.

To start with,
$$\dibar \scrA = (1+|W|^2) Z_b\, d\Zbar_b + (1+|Z|^2) W_b\, d\Wbar_b$$
where $|Z|^2 = \sum_a Z_a \Zbar_a$ and $|W|^2 = \sum_a W_a \Wbar_a$; then
\begin{align}\di \dibar\scrA &= (1+|W|^2) dZ_b \wedge d\Zbar_b + \Wbar_a Z_b\, dW_a \wedge d\Zbar_b + \Zbar_a W_b\, dZ_a \wedge d\Wbar_b
+ (1+|Z|^2) dW_b \wedge d\Wbar_b \notag \\
&= (dZ \, dW) \wedge \begin{pmatrix} (1+|W|^2) I_n & \Zbar^T W \\ \Wbar^T Z & (1+|Z|^2) I_n \end{pmatrix} \begin{pmatrix} d\Zbar^T \\ d\Wbar^T\end{pmatrix},
\label{Ahermit}
\end{align}
where we regard $Z$ and $W$ as row vectors of length $n$, $I_n$ is the $n\times n$ identity matrix,
and ${}^T$ indicates transpose.
Following Lee, we identify the $(1,1)$-form $\di \dibar\scrA$ with the hermitian matrix in \eqref{Ahermit}
that gives its coefficients with respect to the affine coordinate differentials;
similarly, we identify $\di \scrA$ with the row vector $\begin{pmatrix} (1+|W|^2) \Zbar\,;\, (1+|Z|^2) \Wbar\end{pmatrix}$ of length $2n$.
With these conventions, we identify $\di\dibar f$ with the hermitian matrix
$$\rG = \dfrac{f'}{|\B|^2}\left[ \di \dibar \scrA - \dfrac1{\scrA} (\di \scrA)^T \dibar \scrA +
\left(\dfrac{f''}{|\B|^2 f' } + \dfrac1{\scrA}\right)\left( \di \scrA -(\scrA/\scrB)\di \scrB\right)^T \left(\dibar \scrA - (\scrA/\overbar{\scrB}) \dibar \overbar{\scrB}\right)\right].$$
We will only need the value of the metric at the point $\opoint=\Ahat(o)$, where
$W_n=\tau=\tanh t$, $Z_n = -\tau$ and all other coordinates are zero.  At this point, we have
\begin{equation}\label{Gpoint}
\rG = \dfrac1{1-\tau^4}\left[ (1+\tau^2)I_{2n}
+ \begin{pmatrix} (q-\tau^2) \rM & -q \rM \\ -q \rM & (q-\tau^2) \rM \end{pmatrix}\right],
\end{equation}
where
$$\rM= \Etn^T \Etn, \qquad \text{and} \quad q = \dfrac{2\tau^2}{(1-\tau^2)^2}.$$

\subsection{Checking Lagrangian-ness}\label{lagsec}
In this section, we prove:
\begin{prop}\label{lagness}  If $M \subset \CP^n$ is an arbitrary submanifold, then $\Phi(NM)$ is a Lagrangian
submanifold of $\M$.
\end{prop}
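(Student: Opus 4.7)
The plan is to verify directly that $\Phi^*\Kahler$ vanishes on $NM$. Using the affine coordinate chart $\A$, in which $\Kahler = \ri\,\di\dibar f$ is encoded by the hermitian matrix $\rG$ from \eqref{Gpoint}, the Lagrangian condition amounts to checking that for any two real tangent vectors $V_1,V_2$ to $\Phi(NM)$ (realized as row vectors in $\C^{2n}$ via $d(\Ahat\circ\Phihat\circ\rho)$), the hermitian pairing $V_1\,\rG\,\overline{V_2}^T$ is a real number. It thus suffices to verify this for all pairs of basis tangent vectors at one representative of each fiber of $NM$.

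By the $U(n+1)$-equivariance of $\pi$, $\Phi$, and the Stenzel metric, we may standardize so that $\vz=E_0$ and $e_{2n}=\ri E_n$. A basis of tangent vectors at this point is then read off from the rows of \eqref{daffy}: the vectors $V_\a$ (for $\a=1,\ldots,k$), $V_\mu$ (for $\mu=k+1,\ldots,2n-1$), and $V_t$. Two consequences of the standardization drive the needed cancellations: real-orthogonality of the frame with $e_{2n}=\ri E_n$ forces the $n$-th entry $p_j := (e_j)_n$ to be real for all horizontal $j<2n$, and a direct calculation then yields the identity $r_j = \langle \ri e_{2n},e_j\rangle = -p_j$; meanwhile the disjointness of the index ranges $\a\leq k<\mu$ forces $\Re(e_\a\cdot\overline{e_\mu})=0$.

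The computation proceeds by splitting $\rG = \frac{1}{1-\tau^2}I_{2n} + \widetilde\rG$, where $\widetilde\rG$ is built from the rank-one matrix $\rM = \Etn^T\Etn$ and therefore only detects the $\Etn$-components of its inputs. Several cross-pairings drop out essentially for free: $(V_\a,V_t)$ and $(V_\mu,V_t)$ vanish identically because $V_t=(1-\tau^2)(-\Etn;\Etn)$ has an antisymmetric structure that produces an exact cancellation against the symmetric block structure of $\rG$ combined with the equality $(A_\a)_n=(B_\a)_n$; and the pairing $(V_\mu,V_\nu)$ yields a manifestly real answer via $\et\mu\cdot\overline{\et\nu}+\overline{\et\mu}\cdot\et\nu = 2\delta_{\mu\nu}$ together with reality of $p_\mu p_\nu$.

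The decisive cases are $(V_\a,V_\b)$ and $(V_\a,V_\mu)$. Using the symmetry $h_{\a\b}=h_{\b\a}$ (which eliminates the leading imaginary term of the $I_{2n}$-contribution), and the observation that all four $\rM$-products $A_\a\rM\overline{A_\b}^T$, $A_\a\rM\overline{B_\b}^T$, $B_\a\rM\overline{A_\b}^T$, $B_\a\rM\overline{B_\b}^T$ collapse to a single scalar $\Pi_{\a\b}=(A_\a)_n\overline{(A_\b)_n}$ (because $(B_\a)_n=(A_\a)_n$), one can combine the $I_{2n}$- and $\widetilde\rG$-contributions and see the imaginary parts assemble into expressions proportional to $(r_\a+p_\a)H_\b-(r_\b+p_\b)H_\a$, where $H_\a := \sum_\gamma h_{\a\gamma}p_\gamma$; this vanishes identically by the standardization identity $r_j+p_j=0$. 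The main obstacle is the bookkeeping in these two cases, but once the cancellations are checked the proposition follows.
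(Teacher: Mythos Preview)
Your approach is correct and is, at bottom, the same direct verification as the paper's: both compute the Stenzel K\"ahler form on all pairs of basis tangent vectors from \eqref{daffy} and check that the imaginary part of the hermitian pairing vanishes. The organizational strategy differs, however. The paper decomposes tangent vectors into \emph{vertical} pieces $(\vz;-\overbar{\vz})$ and \emph{horizontal} pieces $(\vz;\overbar{\vz})$, and first observes (from the block structure of $\rG$) that $\Kahler$ vanishes on any pair of two vertical or two horizontal vectors; since $V_\mu$ and $V_t$ are purely vertical, this immediately disposes of all pairings except those involving the horizontal part of the mixed vectors $V_\a$. You instead split $\rG$ into its scalar and rank-one parts and run through all pairings directly, relying on the identities $(A_\a)_n=(B_\a)_n$ and $r_j+p_j=0$ (which in the paper's language is $\et{j}\cdot\Etn=-r_j$) to drive the cancellations. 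Your key identity $r_j+p_j=0$ is precisely what the paper uses (phrased as ``$\et\a\cdot\Etn$ is real and equal to $-r_\a$''), so the two computations reduce to the same algebra. The horizontal/vertical viewpoint buys a more geometric explanation and slightly less bookkeeping; your $\rG$-splitting is more mechanical but equally valid.
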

\begin{proof}
We will identify a real tangent vector $\vv \in T_\opoint \M$
with the row vector in $\C^{2n}$ given by $\vv \intprod (dZ; dW)$.  With this convention,
the Stenzel metric $g$ satisfies
\begin{equation}\label{gform}
g(\vv,\vw) = 2\Re(\overbar{\vv} \rG \vw^T),
\end{equation}
and its K\"ahler form $\Kahler = \ri \di \dibar f$ satisfies
\begin{equation}\label{omform}
\Kahler(\vv, \vw) = -2\Im (\overbar{\vv} \rG \vw^T),\quad \forall \vv, \vw \in T_\opoint \M,
\end{equation}

The process of verifying that $\Kahler$ vanishes on the tangent space to $\Phi(NM)$ is made simpler by
decomposing tangent vectors into vertical and horizontal pieces.  (By vertical vectors, we mean those
tangent to the images under $\Phi$ of the fibers of $T\CP^n$, and the horizontal vectors are those in the orthogonal
complement with respect to $g$.)  Computing
$$\left.\dfrac{d}{d s}\right\vert_{s=0} \Ahat \circ \Phihat(\zeta, \xi + s \eta)$$
when $\zeta = E_0$, $\xi = \ri t E_n$ and $\eta \in \C^{n+1}$ ranges over all vectors satisfying $\eta \cdot \overbar{\zeta}=0$, shows that
the space of vertical tangent vectors at $\opoint$ consists of all vectors of the form $(\vz; -\overbar{\vz})$ for $\vz \in \C^n$.
Then, noting the special form of $\rG$ in \eqref{Gpoint}, it is easy to check using \eqref{gform} that the space of horizontal vectors consists of all vectors of the
form $(\vz; \overbar{\vz})$.  It is also easy to check that $\Kahler(\vv, \vw)=0$ whenever
$\vv$ and $\vw$ are both vertical or both horizontal.

Equation \eqref{daffy} shows that the tangent space to $\Phi(NM)$ at $\opoint$ is spanned
by purely vertical vectors $(\ri \et\mu; \ri \trimb{e_\mu})$ and $(\Etn; -\Etn)$,
and the `mixed' vectors
$$\vu_\a := (\et\a - \tau^2 r_\a \Etn; \trimb{e_\a} - \tau^2 r_\a \Etn)
 -\tau h_{\a\b} (\ri \et\b; \ri \trimb{e_\b}).$$
(Note that the first term is horizontal and the second is vertical.)
When evaluating $\Kahler$ on the pairing of $\vu_\alpha$ with a purely vertical
vector, we only need the horizontal part of $\vu_\alpha$.  For example, we compute
using \eqref{Gpoint} and \eqref{omform} that
\begin{multline*}
\Kahler( \vu_\alpha,\,  (\Etn; -\Etn) )= \dfrac{-2}{1-\tau^4}
\Im\left[ (1+\tau^2)\left( (\trimb{e_\a} -\tau^2 r_\a \Etn) \cdot \Etn + (\et\a -\tau^2 r_\a \Etn)\cdot (-\Etn)\right)\right.
\\
+(q-\tau^2)\left( (\trimb{e_\a} -\tau^2 r_\a \Etn)\cdot \rM \Etn + (\et\a -\tau^2 r_\a \Etn)\cdot \rM (-\Etn)\right)
\\
-q \left.\left( (\trimb{e_\a} -\tau^2 r_\a \Etn)\cdot \rM (-\Etn) + (\et\a -\tau^2 r_\a \Etn)\cdot \rM \Etn\right)\right]=0.
\end{multline*}
In fact, the terms on each line inside the square brackets cancel out because $\rM \Etn = \Etn$ and because
$$0 = \langle e_\a, e_{2n}\rangle = \Re( \et\a \cdot \ri \Etn),$$
so that $\et\a \cdot \Etn$ is real (and equal to $-r_\a$).
Pairing with the other vertical vectors, we get
\begin{multline*}
\Kahler( \vu_\alpha,\,  (\ri \et\mu; \ri \trimb{e_\mu}) )=\dfrac{-2}{1-\tau^4}
\Im\left[ (1+\tau^2)\left( (\trimb{e_\a} -\tau^2 r_\a \Etn) \cdot \ri \et\mu + (\et\a -\tau^2 r_\a \Etn) \cdot \ri\trimb{e_\mu} \right)\right.
\\
+(q-\tau^2) \left( (\trimb{e_\a} -\tau^2 r_\a \Etn) \cdot \ri \rM \et\mu + (\et\a -\tau^2 r_\a \Etn) \cdot \ri\rM \trimb{e_\mu} \right)
\\
-q\left.\left( (\trimb{e_\a} -\tau^2 r_\a \Etn) \cdot \ri \trimb{e_\mu} + (\et\a -\tau^2 r_\a \Etn) \cdot \ri \et\mu \right)\right]
\end{multline*}
Again, $\et\mu \cdot \Etn=-r_\mu$ is real, and so $\rM \et\mu = -r_\mu \Etn$; we also note that
$$0 = \langle e_\a, e_\mu \rangle = \Re( \et\a \cdot \trimb{e_\mu}) = \tfrac12(\trimb{e_\a} \cdot \et\mu +
\et\a \cdot \trimb{e_\mu}).$$
Thus,
$$\Kahler( \vu_\alpha,\,  (\ri \et\mu; \ri \trimb{e_\mu}) ) = \dfrac{-4}{1-\tau^4}
\left[ (1+\tau^2)\tau^2 r_\a r_\mu + (q-\tau^2)(1+\tau^2) r_\a r_\mu - q(1+\tau^2) r_\a r_\mu\right] = 0.$$

It remains to check that $\Kahler$ is zero on a pair $\vu_\a, \vu_\b$ of `mixed' vectors.
Let
$$\vv_\a = (\et\a-\tau^2 r_\a \Etn; \trimb{e_\a} - \tau^2 r_\a\Etn),$$
the horizontal part of $\vu_\a$, and let $\vw_a = (\ri \et\b; \ri \trimb{e_\b})$.
  First, we compute that
\begin{multline*}
\Kahler( \vv_\a, \vw_\b)
=\dfrac{-2}{1-\tau^4}
\Im\left[ (1+\tau^2)\left( (\trimb{e_\a} -\tau^2 r_\a \Etn)\cdot \ri \et\b + (\et\a -\tau^2 r_\a \Etn) \cdot \ri \trimb{e_\b}\right) \right.
\\
+ (q-\tau^2) \left( (\trimb{e_\a} -\tau^2 r_\a \Etn) \cdot \ri \rM \et\b + (\et\a -\tau^2 r_\a \Etn) \cdot \ri \rM \trimb{e_\b}\right)
\\
- q \left.\left( (\trimb{e_\a} -\tau^2 r_\a \Etn) \cdot \ri \rM \trimb{e_\b} + (\et\a -\tau^2 r_\a \Etn) \cdot \ri \rM \et\b\right)\right]
\\
= \dfrac{-4}{1-\tau^4}\left[ (1+\tau^2)(\delta_{\a\b} + \tau^2 r_\a r_\b) + (q-\tau^2)(1+\tau^2) r_\a r_\b-q(1+\tau^2)  r_\a r_\b\right]
=\dfrac{-4}{1-\tau^2}\delta_{\a\b},
\end{multline*}
where we have used the fact that $\rM e_\b = -r_\b \Etn$ and
$\delta_{\a\b} = \langle e_\a, e_\b\rangle = \Re( \trimb{e_\a} \cdot \et\b).$
Then
\begin{align*}
\Kahler(\vu_\a, \vu_\b) &= \Kahler(\vv_\a - \tau h_{\a\gamma} \vw_\gamma, \vv_\b - \tau h_{\b\epsilon} \vw_\epsilon) \\
&= -\tau h_{\a\gamma} \Kahler(\vw_\gamma, \vv_\b)-\tau h_{\b\epsilon} \Kahler(\vv_\a, \vw_\epsilon) \\
&= \dfrac{-4}{1-\tau^2}\left(\tau h_{\a\gamma} \delta_{\gamma\b}-\tau h_{\b\epsilon} \delta_{\a\epsilon} \right)=0,
\end{align*}
where we use index ranges $1\le \alpha, \beta, \gamma, \epsilon \le k$.
\end{proof}

\section{The Austerity Condition}\label{ausect}
Let $\rS$ be the matrix on the right in \eqref{daffy}.  Then
the pullback of the holomorphic volume form $\Theta$ under $\A \circ \Phi\circ \imath$ equals  $\det \rS$
times a real volume form on $NM$.  In this section we will evaluate this determinant, and find conditions
under which it has constant phase.   In what follows, we again represent an arbitrary
normal vector $\bnu \in NM$ by its horizontal lift in $T_\vz \Mhat$, which is given by
$t e_{2n}$ for some $t\in \R$ and some adapted frame at $\vz$.

First, we consider the special case when $\bnu \in \calN$.  Then $r_\a=0$ for all $\a$, and we can factor $\rS$ as
\begin{equation}\label{holoS}
\begin{pmatrix}
I_k - \ri\tau H & 0 & 0 \\
0 & \ri\tau I_{2n-k-1} & 0 \\
0 & 0 & \ri (1-\tau^2)
\end{pmatrix}
\left(\begin{array}{ccc}
\et\a&;&\overbar{\et\a} \\
\et\mu&;&\overbar{\et\mu}\\
\ri \Etn&;&-\ri\Etn
\end{array}\right),
\end{equation}
where $I_k$ is the $k\times k$ identity matrix and $H$ is the matrix with entries $h_{\a\b}$.
Letting $V$ be the matrix on the right in \eqref{holoS}, we note that
\begin{equation}\label{Varrange}
\tfrac12 V \begin{pmatrix} I_n & -\ri I_n \\ I_n & \ri I_n \end{pmatrix}=
\begin{pmatrix} \split{\et\a}\\ \split{\et\mu}\\ \split{{\ri} \Etn}\end{pmatrix}.
\end{equation}
The matrix on the right-hand side of \eqref{Varrange} lies in $O(2n)$, but it has determinant $(-1)^n$, since
when we substitute our particular frame into \eqref{sonny}, we obtain
$$1= \det \begin{pmatrix} 1 & 0 \ldots &  0 & 0 \ldots  \\
                        0 & 0 \ldots & 1 & 0 \ldots \\
                        0 & \Re \et\a & 0 &\Im\et\a \\
                        0 & \Re \et\mu & 0 &\Im \et\mu \\
                        0 & 0 \ldots & 0 & \Etn \end{pmatrix}
= (-1)^n \det \begin{pmatrix}\Re \et\a &\Im\et\a \\
                           \Re \et\mu &\Im \et\mu \\
                           0 \ldots & \Etn \end{pmatrix}.
$$
Taking determinants on each side in \eqref{Varrange} gives $(\ri/2)^n \det V  = (-1)^n$, so that $\det V = (2\ri)^n$.
Thus,
\begin{equation}\label{detso}
\det \rS = (-2)^n \ri^{n-k}\tau^{2n-k-1}(1-\tau^2) \det(I_k - \ri \tau H).
\end{equation}

It is clear that the real part of $\ri^{n-k} \det\rS$ is nonzero for values of $\tau$ in an open interval containing zero.
On the other hand, by diagonalizing $H$ it is easy to see that
\begin{equation}\label{imdet}
\Im \det(I_k -\ri \tau H) = \sum_{j=1}^{\lfloor (k+1)/2\rfloor} (-1)^j \tau^{2j-1} H^{(2j-1)},
\end{equation}
where $H^{(2j-1)}$ denotes the elementary symmetric polynomial of degree $2j-1$ in the eigenvalues of $H$.
Thus, we conclude that
\begin{quote}
For $\bnu \in \calN$, the imaginary part of $\ri^{n-k} \Kahler$ vanishes along the
line in $NM$ spanned by $\bnu$ if and only if all odd-degree elementary symmetric
polynomials in the eigenvalues of $H$ vanish (where $H$ represents $\bnu \cdot \II$ with respect to an orthonormal basis).
\end{quote}

\begin{cor}\label{holcor}  If $M\subset \CP^n$ is a complex submanifold, then $M$ is austere.
\end{cor}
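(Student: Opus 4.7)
The plan is to reduce the corollary to the criterion stated immediately before it. For a complex submanifold $M \subset \CP^n$ both $T_p M$ and $N_p M$ are $\rJ$-invariant, so in the notation of remark (3) we have $\calH = TM$, $\calN = NM$, and $\calD = \calE = 0$. Consequently every normal vector $\bnu \in NM$ lies entirely in $\calN$, so the criterion that was just derived for $\bnu \in \calN$ applies uniformly to the whole normal bundle: austerity of $M$ is equivalent to the vanishing, for every $\bnu \in NM$ and every $p \in M$, of all odd-degree elementary symmetric polynomials in the eigenvalues of the symmetric matrix $H$ representing $\bnu \cdot \II$ in an orthonormal frame of $T_p M$.

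The remaining task is to verify this polynomial vanishing. Here I would invoke the standard fact that for a complex submanifold of a K\"ahler manifold the second fundamental form satisfies $\II(\rJ X, Y) = \rJ\, \II(X,Y) = \II(X, \rJ Y)$. Combined with the $\rJ$-invariance of $NM$ and the compatibility of $\rJ$ with the metric, a direct computation shows that the shape operator $A_\bnu$ obeys $A_\bnu \circ \rJ = -\rJ \circ A_\bnu$ on $T_p M$. Hence $\rJ$ intertwines the $\lambda$-eigenspace of $A_\bnu$ with the $(-\lambda)$-eigenspace, so the spectrum of $H$ (which coincides with that of $A_\bnu$ in the orthonormal frame) is symmetric about zero. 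Every odd-degree elementary symmetric function in a multiset invariant under $\lambda \mapsto -\lambda$ vanishes identically, completing the argument.

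I expect no serious obstacle: both steps are short, and the whole point of this corollary is to exhibit complex submanifolds as immediate examples, cashing in on the analysis of $\bnu \in \calN$ already carried out. The only mild care needed is the observation that, because $M$ is complex, the \emph{entire} normal bundle lies in $\calN$, so no further analysis involving $\calE$-components of the normal vector is required before applying the criterion.
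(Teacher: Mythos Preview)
Your argument is correct and follows essentially the same route as the paper: both proofs use the identity $\II(\rJ X,Y)=\rJ\,\II(X,Y)$ to obtain the anticommutation $H\rJ=-\rJ H$ (equivalently $A_\nu\rJ=-\rJ A_\nu$), and then deduce that the odd part of $\det(I_k-\ri\tau H)$ vanishes. The only cosmetic difference is that the paper reaches this conclusion by the conjugation identity $\det(I_k-\ri\tau H)=\det(I_k-\ri\tau J^{-1}HJ)=\det(I_k+\ri\tau H)$, whereas you phrase it via the spectral symmetry $\lambda\leftrightarrow -\lambda$; these are equivalent one-line consequences of the same anticommutation.
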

\begin{proof}  Because the complex structure $\rJ$ is parallel along $M$, the second fundamental form satisfies $\II(X,\rJ Y)=\rJ \II(X,Y) = \II(\rJ X, Y)$.  If matrix $J$ represents
the complex structure with respect to an orthonormal basis, then $HJ = J^T H= -JH$,
and hence $JHJ = -J^2 H=H$.  So,
$$\det(I_k - \ri \tau H) = \det(I_k - \ri\tau J^{-1}H J) = \det(I_k + \ri \tau JHJ) = \det(I_k + \ri \tau H),$$
and so $\Im\det(I_k - \ri\tau H)=0$.
\end{proof}

Now consider the more general case, when $\bnu$ has a non-zero component in $\calE$, and thus $\rJ \bnu$ has a
nonzero orthogonal projection onto $\calD$.  We will further specialize the orthonormal frame so that
$$\ri e_{2n} = \cos \theta\, e_1 + \sin\theta\, e_{2n-1},$$
where $\theta$ is the angle between $\rJ \bnu$ and the tangent space to $M$.
Since we also have $e_{2n}=\ri E_n$, then $r_1 =\cos\theta$, $r_{2n-1}=\sin\theta$ and all other $r_\alpha, r_\mu$ are zero.

To simplify calculating $\det\rS$, we modify the matrix by adding $\tau^2 r_1/(\tau^2-1)$ times row $2n$ to row $1$, giving
$$\rS' = \left(\begin{array}{ccc}
\et\a -\ri \tau \Ht\a &;& \trimb{e_\a} - \ri \tau \trimb{h_\a}-2\tau^2 r_\a \Etn \\
\ri \tau \et\mu &;& \ri\tau \trimb{e_\mu} \\
(\tau^2-1)\Etn &;& (1-\tau^2)\Etn
\end{array}
\right),
$$
where we introduce the abbreviation $\Ht\a = h_{\a\b} \et\b$.  Again, only coefficient $r_1=\cos\theta$ is nonzero, and we expand the determinant
in terms of it.  Letting $\rS_0$ denote the matrix in \eqref{holoS}, we have
$$\det\rS = \det\rS' = \det\rS_0 + (-1)^{2n-1}(-2\tau^2\cos\theta)
\det \left(\begin{array}{ccc}
\et\b -\ri \tau \Ht\b &;& \clipb{e_\b} - \ri \tau \clipb{h_\b} \\
\ri \tau \et\mu &;& \ri \tau \clipb{e_\mu} \\
(\tau^2-1) \Etn &;& 0
\end{array}\right),
$$
where we now use the index range $2\le \beta \le k$, and $\clip{\ }$ indicates the result of
deleting the first and last entries from a vector in $\C^{n+1}$.  Thus, the matrix on the right
is $2n-1 \times 2n-1$; moreover, in the bottom row only the $n$th entry is nonzero, so we may use a cofactor expansion to write
\begin{align*}\det\rS &=\det \rS_0 + 2\tau^2 \cos\theta (-1)^{2n-2+n-1} (\tau^2-1)
\det\left(\begin{array}{ccc}
\clip{e_\b} -\ri \tau \clip{H_\b} &;& \clipb{e_\b} - \ri \tau \clipb{h_\b} \\
\ri \tau \clip{e_\mu} &;& \ri \tau \clipb{e_\mu}\end{array}\right) \\
&=
\det\rS_0 + 2\cos\theta  (-1)^{n-3}\tau^2 (\tau^2-1) \det \begin{pmatrix} I_{k-1} -\ri\tau \clip{H} & 0 \\ 0 & \ri \tau I_{2n-k-1}\end{pmatrix}
\det\left( \begin{array}{ccc} \clip{e_\b} &;& \clipb{e_\b} \\ \clip{e_\mu} &;& \clipb{e_\mu}\end{array}\right),
\end{align*}
where $\clip{H}$ is the $(k-1)\times (k-1)$ matrix obtained from $H$ by deleting the first row and column.

\begin{lemma}\label{debt}Let $\clip{V} = \left( \begin{array}{ccc} \clip{e_\b} &;& \clipb{e_\b} \\ \clip{e_\mu} &;& \clipb{e_\mu}\end{array}\right)$.
Then $\det \clip{V} =(-2\ri)^{n-1}\cos\theta$.
\end{lemma}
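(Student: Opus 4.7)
The plan is to mirror the argument that yielded $\det V = (2\ri)^n$ earlier in this section. First I would apply the direct analog of \eqref{Varrange} with $n$ replaced by $n-1$:
$$\tfrac{1}{2}\clip{V} \begin{pmatrix} I_{n-1} & -\ri I_{n-1} \\ I_{n-1} & \ri I_{n-1}\end{pmatrix} = \begin{pmatrix} \split{\clip{e_\b}} \\ \split{\clip{e_\mu}}\end{pmatrix} =: \clip{A}.$$
The transformation matrix on the left has determinant $(2\ri)^{n-1}$, so $\det \clip{V} = (-2\ri)^{n-1} \det \clip{A}$, and the task reduces to showing $\det \clip{A} = \cos\theta$.

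Next, I would identify $\clip{A}$ as a submatrix $A[I, J]$ of the $(2n)\times(2n)$ orthogonal matrix
$$A = \begin{pmatrix}\Re\et\a & \Im\et\a \\ \Re\et\mu & \Im\et\mu \\ 0\ldots & \Etn\end{pmatrix},$$
with $\det A = (-1)^n$, that appeared in the earlier calculation; specifically, $I = \{2,\dots,2n-1\}$ deletes the rows for $e_1$ and $e_{2n} = \ri E_n$, and $J = \{1,\dots,n-1,\,n+1,\dots,2n-1\}$ deletes the columns for the real and imaginary parts of the entry in position $n$. The crucial geometric input is that, since $\ri e_{2n} = \cos\theta\, e_1 + \sin\theta\, e_{2n-1}$ and $e_{2n} = \ri E_n$,
$$\cos\theta\, e_1 + \sin\theta\, e_{2n-1} = -E_n.$$
Applying $\split{\cdot}$, the row operation $R_1 \mapsto \cos\theta\, R_1 + \sin\theta\, R_{2n-1}$ on $A$ produces a matrix $A'$ whose first row is zero except for the value $-1$ in column $n$; this operation scales the determinant by $\cos\theta$, so $\det A' = (-1)^n \cos\theta$. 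Since row $2n$ of $A'$ (unchanged from $A$) is already zero except for the value $1$ in column $2n$, two successive cofactor expansions of $\det A'$ (first along row $1$, then along the last row of the resulting submatrix) isolate $\det \clip{A}$ as the remaining minor.

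The one thing that requires care is the sign bookkeeping through the two cofactor expansions: the first contributes $(-1)^{1+n}\cdot(-1) = (-1)^n$ and the second contributes $(-1)^{(2n-1)+(2n-1)} = 1$, so that $\det A' = (-1)^n \det \clip{A}$ and hence $\det \clip{A} = \cos\theta$. Combining with the reduction in the first paragraph yields $\det \clip{V} = (-2\ri)^{n-1}\cos\theta$, as desired.
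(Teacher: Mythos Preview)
Your proof is correct and follows essentially the same path as the paper: relate $\clip V$ to the real $(2n-2)\times(2n-2)$ matrix $\clip A$ via the block transformation, identify $\clip A$ as a minor of the orthogonal $2n\times 2n$ matrix $A$ with $\det A=(-1)^n$, and extract that minor by a row operation followed by two cofactor expansions. The only noteworthy difference is in the row operation: the paper uses the unipotent operation $R_1\mapsto R_1+\tan\theta\,R_{2n-1}$ (leaving the determinant unchanged but producing the pivot $-\sec\theta$), and in order to see that this kills the $\clip{}$-part of row~1 it first argues, via a dimension count in $\R^{2n-2}$, that $\clip e_1$ and $\clip e_{2n-1}$ are linearly dependent. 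Your operation $R_1\mapsto\cos\theta\,R_1+\sin\theta\,R_{2n-1}$ scales the determinant by $\cos\theta$ and produces pivot $-1$; the vanishing of the rest of the new row~1 follows immediately from the frame identity $\cos\theta\,e_1+\sin\theta\,e_{2n-1}=-E_n$, bypassing that dimension-count argument. The subsequent sign bookkeeping matches, and the conclusion $\det\clip V=(-2\ri)^{n-1}\cos\theta$ is the same.
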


Using this lemma (to be proved later), and the formula \eqref{detso} for $\det\rS_0$, we have
\begin{align*}
\det\rS &= \det\rS_0 + 2(-1)^{n-3}\cos^2 \theta \tau^2 (\tau^2-1)(\ri \tau)^{2n-k-1} (-2\ri)^{n-1}\det(I_{k-1} -\ri \tau \clip{H})\\
&= (-2)^n \tau^{2n-k-1} \left[ \ri^{n-k} (1-\tau^2) \det(I_k - \ri \tau H) +\ri^{2n-k-1}(-\ri)^{n-1}  \tau^2(1-\tau^2)  \cos^2\theta
\det(I_{k-1} -\ri \tau \clip{H})\right]\\
&=2^n \ri^{n-k} \tau^{2n-k-1}(1-\tau^2)\left[ \det(I_k - \ri \tau H)  +\tau^2 \cos^2\theta \det(I_{k-1} -\ri\tau \clip{H})\right].
\end{align*}
As in \eqref{imdet},
\begin{multline*}\Im \left[\det(I_k -\ri \tau H)+\tau^2 \cos^2\theta\det(I_{k-1} -\ri\tau \clip{H})\right]
\\
=  \sum_{j=0}^{\lfloor (k-1)/2\rfloor} (-1)^{j+1} \tau^{2j+1} H^{(2j+1)}
 +\cos^2\theta \sum_{j=1}^{\lfloor k/2 \rfloor}(-1)^j\tau^{2j+1} \clip{H}^{(2j-1)}\\
= -\tau H^{(1)} + \tau^3 (H^{(3)}-\cos^2 \theta \clip{H}^{(1)}) - \tau^5 (H^{(5)}-\cos^2 \theta \clip{H}^{(3)}) + \ldots
\end{multline*}
Thus, for $\bnu \notin \calN$ the imaginary part of $\ri^{n-k} \Hvol$ vanishes along the
line in $NM$ spanned by $\bnu$ if and only if $H^{(1)}=0$, $H^{(3)}=\cos^2 \theta \clip{H}^{(1)}$, and so on,
up to $H^{(k)} = \cos^2 \theta \clip{H}^{(k-2)}$ if $k$ is odd, or $0 = \cos^2\theta \clip{H}^{(k-1)}$ if $k$ is even.
On the other hand, if $\bnu \in \calN$ then $\cos\theta=0$ and these conditions simplify to the requirement that all the odd degree symmetric
polynomials in the eigenvalues of $H$ vanish---i.e., the conclusion we reached for the special case above.   We can therefore summarize our calculations as follows:

\begin{thm}\label{austerethm}
A submanifold $M\subset \CP^n$ of real dimension $k$ is austere if and only if, for every unit normal vector $\nu \in N_p M$ and every point $p \in M$,
\begin{equation}\label{Hcond}
H^{(2j+1)}=\cos^2\theta\, \clip{H}^{(2j-1)},\qquad  0\le j \le \lfloor k/2\rfloor,
\end{equation}
where $\theta$ is the angle between
$\rJ \bnu$ and $T_p M$, $H$ is the quadratic form $\nu \cdot \II$, $\clip{H}$ denotes the restriction of $\bnu \cdot \II$ to the subspace of $T_p M$ orthogonal to $\rJ \bnu$, and the odd-degree symmetric polynomials in \eqref{Hcond}
are understood to be zero if the degree is negative or larger than the size of the representative matrix.
\end{thm}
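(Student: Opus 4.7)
The plan is to assemble the theorem from the explicit formula for $\det\rS$ derived just before its statement. By Proposition \ref{lagness}, $\Phi(NM)$ is Lagrangian in $\M$, so austerity of $M$ is equivalent to the restriction of $\Hvol$ to $\Phi(NM)$ having globally constant phase. Since the pullback of $\Hvol$ under $\A\circ\Phi\circ\imath$ equals $\det\rS$ times the real volume form dual to $(\w^\a,\,\widetilde\psi^\mu_{2n},\,dt)$, this reduces the problem to showing that $\ri^{n-k}\det\rS$ is real on every fiber of $NM\to M$. The power of $\ri$ is forced by evaluating at $H\equiv 0$, where \eqref{detso} makes $\ri^{n-k}\det\rS$ manifestly real (and nonzero away from the zero section).

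First I would handle the special case $\bnu\in\calN$. Here all $r_\a$ vanish, so the factorization \eqref{holoS} applies and gives $\det\rS = (-2)^n\ri^{n-k}\tau^{2n-k-1}(1-\tau^2)\det(I_k-\ri\tau H)$. Using the diagonalization computation \eqref{imdet}, $\Im(\ri^{n-k}\det\rS)$ becomes a polynomial in $\tau$ whose coefficients are (up to signs and a positive real factor) the odd-degree elementary symmetric polynomials $H^{(2j-1)}$. For the polynomial to vanish identically in $\tau$, each $H^{(2j-1)}$ must vanish, which is exactly \eqref{Hcond} in the case $\cos\theta=0$.

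For the general case, I would make a unitary change of frame so that $\ri e_{2n} = \cos\theta\, e_1 + \sin\theta\, e_{2n-1}$, where $\theta$ is the angle between $\rJ\bnu$ and $T_p M$; this leaves $r_1=\cos\theta$ as the sole nonzero $r_\alpha$. A cofactor expansion of $\det\rS$ along the modified first row, combined with Lemma \ref{debt} applied to the resulting block built from the horizontal frame vectors, yields
\[
\det\rS = 2^n\ri^{n-k}\tau^{2n-k-1}(1-\tau^2)\bigl[\det(I_k-\ri\tau H) + \tau^2\cos^2\theta\,\det(I_{k-1}-\ri\tau\clip{H})\bigr].
\]
Expanding the bracket as a polynomial in $\tau$ produces an imaginary part whose $\tau^{2j+1}$-coefficient is (up to sign) $H^{(2j+1)} - \cos^2\theta\,\clip{H}^{(2j-1)}$. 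Since the austere condition must hold at every $\tau$ along each normal line, each coefficient must vanish separately, giving the system \eqref{Hcond}. The truncation range $0\le j\le\lfloor k/2\rfloor$ and the convention that symmetric polynomials of negative or excessive degree are zero simply reflect the sizes of $H$ and $\clip{H}$.

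The main obstacle is Lemma \ref{debt}, which requires a unitary reduction of the $(2n-2)\times(2n-2)$ matrix $\clip{V}$ built from the horizontal frame vectors. Its proof should parallel the earlier calculation $\det V = (2\ri)^n$ via \eqref{Varrange} and \eqref{sonny}, but now one must extract the extra $\cos\theta$ factor arising from the decomposition $\ri e_{2n}=\cos\theta\,e_1+\sin\theta\,e_{2n-1}$. Once this lemma is in place, the theorem follows by merging the two cases and noting that the $\bnu\in\calN$ case is the $\cos\theta=0$ specialization of the general formula.
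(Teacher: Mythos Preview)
Your proposal is correct and follows essentially the same approach as the paper: the paper likewise treats the case $\bnu\in\calN$ first via the factorization \eqref{holoS} and \eqref{imdet}, then handles the general case by specializing the frame so that $\ri e_{2n}=\cos\theta\,e_1+\sin\theta\,e_{2n-1}$, performing a row operation and cofactor expansion, invoking Lemma~\ref{debt}, and reading off the conditions \eqref{Hcond} from the $\tau$-coefficients of the imaginary part of the bracketed expression. Your identification of Lemma~\ref{debt} as the key technical step, and your sketch of its proof via the analogue of \eqref{Varrange}, also matches the paper's treatment.
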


By setting $j=0$ in \eqref{Hcond}, we obtain:
\begin{cor} If $M \subset \CP^n$ is austere, then $M$ is minimal.
\end{cor}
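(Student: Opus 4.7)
The plan is to extract the corollary directly from Theorem \ref{austerethm} by specializing the austerity condition \eqref{Hcond} to its lowest index.

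First I would set $j=0$ in \eqref{Hcond}. This produces the identity $H^{(1)} = \cos^2\theta \cdot \clip{H}^{(-1)}$ at every point $p \in M$ and for every unit normal $\nu \in N_p M$. By the convention stated at the end of the theorem, a symmetric polynomial of negative degree is zero, so the right-hand side vanishes and we are left with $H^{(1)} = 0$, where $H$ is the symmetric bilinear form $\nu \cdot \II$ expressed in an orthonormal basis of $T_p M$.

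Next I would translate $H^{(1)} = 0$ into a statement about the mean curvature. The elementary symmetric polynomial of degree one in the eigenvalues of $H$ is simply its trace, and $\operatorname{tr}(\nu \cdot \II) = \langle \vec{H}_p, \nu\rangle$, where $\vec{H}_p$ is the mean curvature vector of $M$ at $p$ (up to the usual normalization factor). Since this inner product vanishes for every unit normal vector $\nu$, we conclude $\vec{H}_p = 0$ at every point of $M$, which is the definition of minimality.

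No serious obstacle is expected: the argument is a one-line specialization of the main theorem, and the only subtlety is invoking the convention that $\clip{H}^{(-1)} = 0$. I would therefore present the proof in just two or three sentences.

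\begin{proof}
Setting $j=0$ in \eqref{Hcond} gives $H^{(1)} = \cos^2\theta \cdot \clip{H}^{(-1)} = 0$ at every $p \in M$ and for every unit normal $\nu \in N_p M$, since by convention symmetric polynomials of negative degree vanish. But $H^{(1)} = \operatorname{tr}(\nu \cdot \II)$ is (a multiple of) the pairing of the mean curvature vector of $M$ at $p$ with $\nu$. As this pairing is zero for every unit normal, the mean curvature vector vanishes identically, so $M$ is minimal.
\end{proof}
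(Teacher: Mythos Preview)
Your proof is correct and follows exactly the paper's own approach: the corollary is obtained by setting $j=0$ in \eqref{Hcond} and using the convention that $\clip{H}^{(-1)}=0$. Your added remark translating $H^{(1)}=\operatorname{tr}(\nu\cdot\II)=0$ for all $\nu$ into the vanishing of the mean curvature vector simply spells out what the paper leaves implicit.
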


The following examples may help us understand the austerity condition:

\begin{egg} Assume $M \subset \CP^n$ is a hypersurface.  Then $\calN$ has rank zero, $\calD$ and
$\calE$ have rank one, and $\rJ:\calD \to \calE$.  Hence $\theta=0$, and we may write the austere conditions as
$$A^{(2j+1)} = \clip{A}^{(2j-1)}, \qquad 0 \le j \le n-1,$$
where $A$ denotes the scalar-valued second fundamental form of $M$ and $\clip{A}$ is its restriction
to the holomorphic distribution $\calH$.
\end{egg}

\begin{egg}  Assume $M \subset \CP^n$ is a curve.  Then $M$ is austere
 if and only if it is a geodesic.
\end{egg}


\begin{egg}\label{surfex} Assume $M \subset \CP^n$ is a surface which is not a holomorphic curve.  Hence, $\calH$ has rank zero.
Then $M$ is austere if and only if $M$ is minimal and the ``highest-degree condition'', obtained by setting $j=1$ in \eqref{Hcond},
holds; this condition is that $\II(\vv, \vv)=0$, where $\vv \in T_p M$ is the vector orthogonal to $\rJ\bnu$, and
$\bnu$ runs over the unit circle bundle in $\calE_p$ for all $p \in M$.
\end{egg}
Austere surfaces are discussed in more detail in the next section.

\begin{proof}[Proof of Lemma \ref{debt}]
As observed after equation \eqref{Varrange},
$$(-1)^n = \det \begin{pmatrix} \split{\et\a}\\ \split{\et\mu}\\ \split{{\ri} \Etn}\end{pmatrix}
=\det \begin{pmatrix}
\Re \clip{e_1}  & -\cos \theta & \Im \clip{e_1} & 0 \\
\Re \clip{e_\b} & 0            & \Im \clip{e_\b} & 0 \\
\Re \clip{e_\lambda} &0        & \Im \clip{e_\lambda}& 0 \\
\Re \clip{e_{2n-1}}&-\sin\theta &\Im \clip{e_{2n-1}} & 0 \\
0           & 0                 & 0             & 1 \end{pmatrix},
$$
where we take index ranges $1 < \b \le k$ and $k < \lambda < 2n-1$.
Because the first and last entries of $e_\b$  and $e_\lambda$  are zero,
vectors $\clip{e_\b}$ and $\clip{e_\lambda}$ are mutually orthogonal unit vectors in $\C^{n-1}$.  Since $\clip{e_1}$ and $\clip{e_{2n-1}}$
are orthogonal to all of them, these two vectors must be linearly dependent over $\R$.  Since $\trim{e_{2n-1}}$ is a unit vector
with $\trim{e_{2n-1}} \cdot \Etn = -\sin\theta$, then $|\clip{e_{2n-1}}|^2 = \cos^2 \theta \ne 0$.  If we set $\clip{e_1} = a \clip{e_{2n-1}}$
for a scalar $a$, then solving $0 = \langle \trim{e_1}, \trim{e_{2n-1}} \rangle$ gives $a=-\tan\theta$.
Thus, adding $\tan\theta$ times the second-last row to the first row in the matrix gives
$$(-1)^n = \det \begin{pmatrix}
0  & -\sec \theta & 0 & 0 \\
\Re \clip{e_\b} & 0            & \Im \clip{e_\b} & 0 \\
\Re \clip{e_\lambda} &0        & \Im \clip{e_\lambda}& 0 \\
\Re \clip{e_{2n-1}}&-\sin\theta &\Im \clip{e_{2n-1}} & 0 \\
0           & 0                 & 0             & 1 \end{pmatrix}
    = (-1)^n \sec \theta \det \begin{pmatrix} \split{\et\b}\\ \split{\et\lambda}\\ \split{\et{2n-1}}\end{pmatrix}.
$$
On the other hand, as in \eqref{Varrange} we have
$$\tfrac12 \clip{V} \begin{pmatrix} I_{n-1} & -\ri I_{n-1} \\ I_{n-1} & \ri I_{n-1}\end{pmatrix}
= \begin{pmatrix} \split{\et\b}\\ \split{\et\lambda}\\ \split{\et{2n-1}}\end{pmatrix}.$$
Taking determinants on each side and solving gives the desired formula for $\det\clip{V}$.
\end{proof}

\section{Classification of Austere Surfaces}\label{surfsect}

\noindent
In this section we classify surfaces in $\CP^n$ that satisfy
the austere condition of Theorem \ref{austerethm}.

\begin{prop}\label{framep}  Let $M \subset \CP^n$ be an austere surface such that $\calH=0$ at every point.
Then $M$ is totally geodesic.
\end{prop}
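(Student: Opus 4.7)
The plan is to apply Theorem~\ref{austerethm} with $k = 2$ and $\calH = 0$ to pin down the second fundamental form, then finish with minimality and polarization.

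First I would note that $\calH_p = 0$ forces $\calD_p = T_pM$, and since $\calD \oplus \calE$ is $\rJ$-invariant with $\rk\calE = \rk\calD$, we get $\calE_p = \rJ(T_pM)$. So $\rJ$ restricts to an isomorphism $T_pM \to \calE_p$, and as $\bnu$ traces the unit circle in $\calE_p$, the unit vector $\vv \in T_pM$ orthogonal to $\rJ\bnu$ traces the unit circle in $T_pM$.

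For $k = 2$ the conditions \eqref{Hcond} reduce to minimality ($j = 0$) and $\cos^2\theta\,\clip{H}^{(1)} = 0$ ($j = 1$), since $H^{(3)}$ of a $2\times 2$ matrix vanishes identically. Specializing to $\bnu \in \calE_p$ (where $\cos\theta = 1$) gives $\bnu \cdot \II(\vv, \vv) = 0$. Replacing $\bnu$ by a unit combination with a nonzero $\calN_p$-component---for which the associated tangent direction $\vv$ is unchanged, since $\vv$ depends only on the $\calE$-component of $\bnu$---yields, after subtracting the pure-$\calE$ relation, $\bnu' \cdot \II(\vv, \vv) = 0$ for every $\bnu' \in \calN_p$ as well. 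Hence $\II(\vv, \vv) \in \calE_p \cap \bnu^\perp = \R\,\rJ\vv$, so $\II(\vv,\vv) = c(\vv)\,\rJ\vv$ for every unit tangent vector $\vv$.

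Next I would pick any orthonormal frame $\ve_1, \ve_2$ of $T_pM$ and write $\II(\ve_i, \ve_i) = c_i\,\rJ\ve_i$. Minimality gives $c_1\,\rJ\ve_1 + c_2\,\rJ\ve_2 = 0$; since $\rJ\ve_1, \rJ\ve_2$ are linearly independent in $\calE_p$, we get $c_1 = c_2 = 0$. Applying the same reasoning to the rotated frame $\tfrac{1}{\sqrt 2}(\ve_1+\ve_2), \tfrac{1}{\sqrt 2}(\ve_2 - \ve_1)$ gives $\II(\ve_1+\ve_2, \ve_1+\ve_2) = 0$, which expands to $2\II(\ve_1, \ve_2) = 0$. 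Hence $\II = 0$ at $p$, and since $p$ is arbitrary, $M$ is totally geodesic.

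The main subtlety I expect is in the second step: pinning $\II(\vv, \vv)$ to the one-dimensional subspace $\R\,\rJ\vv$ (rather than merely to $\calE_p$ or to a half-space) requires invoking the austere condition for unit normals \emph{outside} $\calE$ and exploiting that $\vv$ depends only on the $\calE$-direction of $\bnu$. Once that observation is made, the remainder is elementary linear algebra.
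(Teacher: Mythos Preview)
Your overall strategy---pinning $\II(\vv,\vv)$ to a line that moves with $\vv$, then using minimality and polarization---is sound, but the first step contains a genuine error. The claim $\calE_p = \rJ(T_pM)$ is false in general: $\calH_p = 0$ says only that $T_pM \cap \rJ(T_pM) = 0$, not that $T_pM \perp \rJ(T_pM)$. Unless the K\"ahler angle of the surface happens to be $\pi/2$, for a unit $\vv \in T_pM$ the vector $\rJ\vv$ has a nonzero \emph{tangential} component and hence does not lie in $\calE_p$. Consequently your assertions that $\cos\theta = 1$ for $\bnu \in \calE_p$, and that $\II(\vv,\vv) \in \R\,\rJ\vv$, are both incorrect as stated; the latter is what your minimality step rests on.

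The argument can be repaired as follows. For $\bnu \in \calE_p$ one still has $\cos\theta \ne 0$ (otherwise $\rJ\bnu \in N_pM \cap (\calD_p\oplus\calE_p) = \calE_p$ would give a $\rJ$-invariant line in $\calE_p$, impossible), so the $j=1$ condition still yields $\bnu \cdot \II(\vv,\vv) = 0$ with $\vv$ orthogonal to the tangential projection of $\rJ\bnu$. Your observation that $\vv$ depends only on the $\calE$-direction of $\bnu$ is correct and gives $\II(\vv,\vv) \in \calE_p \cap \bnu_\calE(\vv)^\perp$, a line in $\calE_p$---but not $\R\,\rJ\vv$. For the minimality step you now need that the lines attached to an orthonormal pair $\ve_1, \ve_2 \in T_pM$ are distinct, i.e.\ $\bnu_\calE(\ve_1) \ne \pm\bnu_\calE(\ve_2)$. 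This holds because $\bnu_\calE(\vv)\in\calE_p$ is characterized by $\langle\bnu_\calE(\vv),\rJ\vv\rangle=0$; a common value would be orthogonal to $T_pM + \rJ(T_pM)$, which is all of $\calD_p\oplus\calE_p$ precisely because $\calH_p = 0$, forcing it to vanish. With this extra check in place (for every orthonormal tangent frame), your minimality and polarization steps go through. The paper's own proof avoids this issue by a direct computation: it parametrizes unit normals by two independent angles $\varphi,\psi$ in an explicit unitary frame, expands $\II^\nu(\xi,\xi)=0$, and reads off the vanishing of each matrix entry by specializing the angles.
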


\begin{proof} By assumption, the splitting \eqref{Tsplit} implies $TM = \calD$, $NM = \calE \oplus \calN$,
where $\calD, \calE$ are rank 2 and $\calD \oplus \calE$ is $\rJ$-invariant.  Let $p \in M$ be an arbitrary point
and let $\nu \in N_p M$ be an arbitrary unit normal vector.
At $p$ we will construct a orthonormal basis for $T_p \CP^n$ of the form
$(\ve_1, \rJ \ve_1, \ve_2, \rJ\ve_2, \ldots, \ve_n, \rJ \ve_n)$, which we will refer to as a {\em unitary frame}.

Fix a unit vector $\ve_1 \in \calD_p$, and let $\theta$ be the angle between $\rJ \ve_1$ and $\calD_p$.
This is the {\em K\"ahler angle}, which is independent of the choice of $\ve_1$  and nonzero by assumption.
Thus, $\rJ \ve_1$ has a nonzero orthogonal projection onto $\calE_p$; let $\vw$ be the unit vector
in the direction opposite to this projection, and let $\ve_2 \in \calE_p$ be a choice of unit vector
orthogonal to $\vw$.  Then $\ve_1, \rJ\ve_1, \ve_2$ are linearly independent, and thus
$(\ve_1, \rJ \ve_1, \ve_2, \rJ \ve_2)$ is an orthonormal basis for $\calD_p \oplus \calE_p$.
Since $\langle \vw, \rJ\ve_1 \rangle = -\sin\theta$, we must have $\vw = -\sin \theta\, \rJ\ve_1 \pm \cos \theta \rJ\ve_2$.
We adjust the sense of $\ve_2$ so that
\begin{equation}\label{wexpr}
\vw = -\sin \theta\, \rJ\ve_1 + \cos \theta \rJ\ve_2.
\end{equation}
If we define
$$\vv := \cos\theta \rJ\ve_1 + \sin \theta \rJ \ve_2$$
then $\vv$ is orthogonal to $\vw$ and $\ve_2$, and thus $(\ve_1, \vv)$ is an orthonormal basis for $\calD_p = T_p M$.

We now choose the remaining vectors of the unitary frame so that $\ve_3$ is the unit vector in the
direction of the orthogonal projection of $\nu$ onto $\calN_p$, and $\calN_p$ is spanned by $\ve_3, \rJ \ve_3, \ldots, \ve_n, \rJ\ve_n$.
Let $\psi$ be the angle between $\nu$ and $\calN_p$, and let $\II^\nu$ be the quadratic
form on $T_p M$ given by $\nu \cdot \II$.
If $\psi=0$ (i.e., $\nu \in \calN_p$) then the austere condition reduces to $\operatorname{tr} \II^\nu = 0$.
We are interested in what additional conditions arise when $\psi \ne 0$, so we assume this from now on.

Let $\vu$ be the unit vector in the direction of the orthogonal projection of $\nu$ onto $\calE_p$.  Then
\begin{equation}\label{defpsi}
\nu = \cos \psi \, \ve_3 + \sin\psi \, \vu.
\end{equation}
Let $\varphi$ be the angle such that $\vu = \cos \varphi \, \ve_2 + \sin \varphi \vw$.  When we substitute this,
and then \eqref{wexpr}, into \eqref{defpsi} we obtain
\begin{align}
\nu &= \cos\psi \, \ve_3 + \sin \psi (\cos \varphi \,\ve_2 + \sin \varphi\, \vw)  \label{longnu1}\\
&= \cos \psi \, \ve_3 + \sin \psi (\cos \varphi\, \ve_2 - \sin \varphi \sin \theta \, \rJ\ve_1 + \sin\varphi \cos \theta \, \rJ\ve_2).
\label{longnu2}
\end{align}
It is important to note that, while fixing $\ve_3$, we can vary the normal vector $\nu$ by varying the angles
$\psi$ and $\varphi$ independently.

As in Example \ref{surfex}, the austere condition requires that $M$ be minimal and that $\II^\nu(\xi,\xi)=0$, where $\xi \in T_p M$ is a unit vector orthogonal to $\rJ \nu$.
By computing $\rJ\nu$ using \eqref{longnu2}, one can verify that
\begin{equation}\label{xiexpr}
\xi = \cos\varphi \,\ve_1 - \sin \varphi\, \vv.
\end{equation}
To compute $\II^\nu$, we express the second fundamental form in our basis normal directions as
$$\II^{\vw}=\begin{pmatrix} a_1&b_1\\ b_1&-a_1\end{pmatrix}, \quad
\II^{\ve_2}=\begin{pmatrix} a_2&b_2 \\ b_2&-a_2\end{pmatrix}, \quad
\II^{\ve_3}=\begin{pmatrix} a_3&b_3\\b_3&-a_3\end{pmatrix},
$$
where each quadratic form is represented by a matrix with respect to the orthonormal basis $(\ve_1, \vv)$.

Now using \eqref{longnu1} and \eqref{xiexpr}, the condition $\II^\nu(\xi,\xi)=0$ can be expanded as
$$
\begin{bmatrix} \cos\varphi & -\sin \varphi \end{bmatrix}
\left( \sin \psi \sin  \varphi \II^\vw + \sin \psi \cos \varphi \II^{\ve_2} + \cos\psi \II^{\ve_3}\right)
\begin{bmatrix} \cos\varphi \\ -\sin \varphi\end{bmatrix}.
$$
Since this equation must be satisfied for all angles $\varphi$ and $\psi$ (provided $\sin\psi \ne 0$), we can take particular values.
Setting $\varphi =0$ yields
$$a_2 \sin \psi + a_3 \cos\psi = 0,$$
which can only hold for all $\psi$ if $a_2 = a_3=0$.  Setting $\varphi = \pi/2$ yields
$$a_1 \sin \psi + a_3 \cos \psi = 0,$$
so that $a_1$ must also vanish.  Taking $a_1=a_2=a_3=0$ into account, the condition becomes
$$\sin \varphi \cos \varphi ( \sin \psi (b_1 \sin\varphi  + b_2 \cos\varphi) + b_3 \cos \psi)=0 \qquad \forall \varphi, \psi,$$
which implies that $b_1=b_2=b_3=0$.

Since $\nu$ is an arbitrary normal direction, $\ve_3$ can range over all of $\calN_p$, and $p$ is arbitrary, we conclude
that $M$ is totally geodesic.
\end{proof}

\begin{thm}\label{surfthm}  If $M\subset \C P^n$ is a connected austere surface, then $M$ is either a holomorphic curve or an open subset
of a real projective plane $\RP^2 \subset \CP^2$, where $\CP^2$ is embedded as a complex linear subspace in $\CP^n$.
\end{thm}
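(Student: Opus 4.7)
The plan is to combine Proposition \ref{framep} with a real-analyticity argument and the classification of totally geodesic submanifolds of $\CP^n$, splitting into two cases according to the K\"ahler angle of $M$. Recall that at each point $p \in M$ the K\"ahler angle $\theta_K(p) \in [0, \pi/2]$ is the angle between $\rJ \ve$ and $T_p M$ for any unit $\ve \in T_p M$; it satisfies $\theta_K(p) = 0$ exactly when $T_p M$ is a complex line (equivalently $\calH_p = T_p M$), and $\theta_K(p) > 0$ exactly when $\calH_p = 0$.

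First I would observe that an austere surface is minimal by the corollary to Theorem \ref{austerethm}, and is therefore real-analytic as a submanifold of the real-analytic Riemannian manifold $\CP^n$ by the standard interior regularity theorem for minimal submanifolds. Consequently $\theta_K$ is a real-analytic function on the connected surface $M$, so the zero set $U_0 = \{p \in M : \theta_K(p) = 0\}$ is either all of $M$ or a proper real-analytic subvariety (closed with empty interior). In the first case, every tangent space $T_p M$ is a complex line, so $M$ is a holomorphic curve and we are done. In the second case, $V := M \setminus U_0$ is open and dense in $M$, and on $V$ the hypothesis $\calH = 0$ of Proposition \ref{framep} holds pointwise. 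Applying that proposition to the austere surface $V$ shows that the second fundamental form of $M$ vanishes on $V$, and by continuity it vanishes on all of $M$, so $M$ is totally geodesic in $\CP^n$.

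To finish, I would invoke the classical classification of connected totally geodesic surfaces in $\CP^n$: up to an isometry of $\CP^n$, any such surface is either an open subset of a linearly embedded $\CP^1$ (on which $\theta_K \equiv 0$) or an open subset of a totally real $\RP^2$ sitting inside a totally geodesic $\CP^2 \subset \CP^n$ (on which $\theta_K \equiv \pi/2$). Since $V$ is nonempty in the present case, the first possibility is excluded, and $M$ must be an open subset of such an $\RP^2 \subset \CP^2 \subset \CP^n$. The main obstacle I expect is justifying the real-analytic regularity (or more elementarily, ruling out a mixed configuration in which $\theta_K$ vanishes on some nontrivial closed set while remaining positive elsewhere) cleanly enough that Proposition \ref{framep} applies on a dense open set and its conclusion extends by continuity to all of $M$; once that is secured, the classification step is standard.
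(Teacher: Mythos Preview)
Your proposal is correct and follows essentially the same route as the paper's proof: austere implies minimal implies real-analytic, which forces the dichotomy on the K\"ahler angle; in the non-holomorphic case Proposition~\ref{framep} yields total geodesy on a dense open set, hence everywhere by continuity, and the classification of totally geodesic surfaces in $\CP^n$ (attributed in the paper to Wolf) finishes the argument. Your treatment of the real-analytic dichotomy is in fact a bit more carefully stated than the paper's compressed ``open and closed subset'' phrasing.
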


\begin{proof}  If $M$ is austere, then $M$ is minimal and hence real-analytic.  Thus, the points where $T_p M$ is $\rJ$-invariant
form an open and closed subset of $M$.  By Proposition \ref{framep}, $M$ is either a holomorphic curve or totally geodesic.
If $M$ is totally geodesic, then by a well-known result of Wolf \cite{Wo}, $M$ is either an open set of a complex line in $\CP^n$, or the real part of a complex projective plane.
\end{proof}

\end{document}